\documentclass[11pt, reqno]{amsart}

\title{Subadditive bijections on the positive reals}

\usepackage[T1]{fontenc}
\usepackage{amsmath}
\usepackage{amssymb}
\usepackage{amsthm}
\usepackage[left=3.5cm, right=3.5cm, paperheight=11.8in]{geometry}
\usepackage{hyperref}
\usepackage{fancyhdr}
\usepackage{enumitem}
\usepackage{comment}
\usepackage{nicefrac}
\usepackage{mathrsfs}
\usepackage{bm}
\usepackage{graphicx}
\usepackage[utf8]{inputenc}
\usepackage{cancel}
\usepackage{mathtools}

\AtBeginDocument{%
   \def\MR#1{}
}

\newtheorem{thm}{Theorem}[section]

\newtheorem{prop}[thm]{Proposition}
\newtheorem{question}[thm]{Question}

\theoremstyle{definition} 
\newtheorem{defi}[thm]{Definition}
\let\olddefi\defi
\renewcommand{\defi}{\olddefi\normalfont}
\newtheorem{example}[thm]{Example}
\let\oldexample\example
\renewcommand{\example}{\oldexample\normalfont}
\newtheorem{rmk}[thm]{Remark}
\let\oldrmk\rmk
\renewcommand{\rmk}{\oldrmk\normalfont}

\author[P.~Leonetti]{Paolo Leonetti}
\address{
Universit\'{a} degli Studi dell’Insubria\\ via Monte Generoso 71 \\ Varese 21100\\ Italy}
\email{leonetti.paolo@gmail.com}
\urladdr{\url{https://sites.google.com/site/leonettipaolo}}

\keywords{Subadditive functions; subadditive bijections; functional inequalities; additive functions; discontinuous bijections.}

\subjclass[2020]{Primary: 39B62;
Secondary: 39B22, 26A15, 15A03.}

\hypersetup{
    pdftitle={Subadditive bijections on the positive reals},
    pdfauthor={Paolo Leonetti},
    pdfmenubar=false,
    pdffitwindow=true,
    pdfstartview=FitH,
    colorlinks=true,
    linkcolor=blue,
    citecolor=green,
    urlcolor=cyan
}

\providecommand{\MR}[1]{}
\providecommand{\bysame}{\leavevmode\hbox to3em{\hrulefill}\thinspace}
\providecommand{\MR}{\relax\ifhmode\unskip\space\fi MR }

\begin{document}

\begin{abstract} 
\noindent We show that there exists a subadditive bijection $f:(0,\infty)\to(0,\infty)$ such that 
$\liminf_{x\to 0}f(x)=0$ and $\limsup_{x\to 0}f(x)=1$. 
This answers a question by 
Matkowski and {\'S}wi{\k a}tkowski in [Proc. Amer. Math. Soc. \textbf{119} (1993), 187--197].
\end{abstract}
\maketitle
\thispagestyle{empty}

\section{Introduction and Main result}

Matkowski and {\'S}wi{\k a}tkowski proved in \cite[Corollary 2]{MR1176072} 
that a subadditive bijection $f: (0,\infty)\to (0,\infty)$ with $\lim_{x\to 0}f(x)=0$ has to be an homeomorphism of $(0,\infty)$, cf. also \cite{MR1088646}. 
In fact, it is known that many local properties of a subadditive function depend on its behaviour in a neighbourhood of the origin, see e.g. \cite{MR89373, MR2467621, MR36796}. 
This type of results are useful in converse Minkowski-type arguments and characterizations of $L^p$-norms (cf. Section \ref{sec:conclusions}), 
where one first obtains an auxiliary subadditive map and then needs to promote it to a well-behaved function, see e.g. \cite{MR1009994, MR1113646, 
MR2449349, Matkowski2013, MP1995, MR1088646, MR1234992}.

Among the discontinuous examples in this setting, Matkowski and {\'S}wi{\k a}tkowski proved in \cite[Theorem 2]{MR1176072} that there exists a subadditive bijection $f: (0,\infty)\to (0,\infty)$ such that 
$$
\liminf_{x\to 0}f(x)>0
\qquad \text{ and }\qquad 
\limsup_{x\to 0}f(x)<\infty, 
$$
cf. also \cite[p. 194]{MR1176072}. 
They also remark that \cite[Example 1]{MR1088646} shows the existence of a subadditive bijection $f: (0,\infty)\to (0,\infty)$ such that 
$$
\liminf_{x\to 0}f(x)=0 \qquad \text{and }\qquad \limsup_{x\to 0}f(x)=\infty. 
$$
Accordingly, they pose the following open question:
\begin{question}\label{q:matk2}
Does there exist a subadditive bijection $f: (0,\infty)\to (0,\infty)$ such that 
\begin{equation}\label{eq:mirkclaim}
\liminf_{x\to 0}f(x)=0 \qquad \text{and }\qquad 0<\limsup_{x\to 0}f(x)<\infty\,\,?
\end{equation}
\end{question}
In \cite[Theorem 3]{MR1176072}, they observed that if $f: (0,\infty)\to (0,\infty)$ is a subadditive bijection and $f^{-1}$ is bounded in a neighborhood of $0$ then $\lim_{x\to 0}f^{-1}(x)=0$. In light of this result, they concluded that Question \ref{q:matk2} ``seems to be rather difficult to decide.'' 

Finally, we provide a positive answer to Question \ref{q:matk2}. 
\begin{thm}\label{thm:mainjanusz}
There exists a subadditive bijection $f:(0,\infty)\to(0,\infty)$ such that 
$$
\liminf_{x\to0}f(x)=0
\qquad\text{and}\qquad
\limsup_{x\to0}f(x)=1.
$$
\end{thm}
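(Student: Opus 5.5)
The plan is to look for $f$ of the form
$$
f(x)=x+\varphi(x), \qquad x>0,
$$
where $\varphi:(0,\infty)\to[0,1]$ is subadditive. Then $f$ is automatically subadditive, since $x\mapsto x$ is additive. The obvious choice is $\varphi=\min(1,|a|)$ for a discontinuous additive (i.e.\ $\mathbb{Q}$-linear) map $a:\mathbb{R}\to\mathbb{R}$. This $\varphi$ is subadditive because $|a|$ is subadditive and truncating a nonnegative subadditive function at $1$ keeps it subadditive. Also $a$ has a dense graph, so in every neighbourhood of $0$ there are points with $|a(x)|$ arbitrarily small and points with $|a(x)|\ge 1$. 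Hence $\liminf_{x\to0}f(x)=0$ and $\limsup_{x\to0}f(x)=1$ hold automatically, because $0\le\varphi\le1$ and $x\to0$. So the whole difficulty lies in choosing $a$, or more generally a subadditive $\varphi$ with values in $[0,1]$ that is the truncation of $|a|$ only near $0$, so that $f$ is a bijection of $(0,\infty)$.

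A first attempt fixes a Hamel basis containing $1$ and lets $\varphi$ depend only on the rational coordinate $q=c(x)$. Writing $x=q+h$, one gets $f(x)=h+g(q)$ with $g(q)=q+\psi(q)$. This cannot work. Bijectivity on every fibre $\{h+q>0\}$ would force $g$ to preserve every Dedekind cut of $\mathbb{Q}$, hence $g=\mathrm{id}$ and $\psi\equiv0$. Therefore $\varphi$ must genuinely mix infinitely many Hamel coordinates.

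I would therefore build $\varphi$, equivalently $a$, by transfinite recursion of length $\mathfrak{c}$. Enumerate the target values $(y_\alpha)_{\alpha<\mathfrak{c}}$ of $(0,\infty)$, and also the points of $(0,\infty)$. At each stage, extend a partial $\mathbb{Q}$-linear map $a_\alpha$, defined on a $\mathbb{Q}$-subspace $V_\alpha$ of size $<\mathfrak{c}$, by adjoining a new basis element with a carefully chosen value of $a$. The choice must achieve two things: every $x\in V_\alpha\cap(0,\infty)$ has $f(x)$ determined, and each $y_\alpha$ acquires a preimage. The set of values $t$ one must avoid is a union of fewer than $\mathfrak{c}$ "bad" sets, each of which is small (finite, or a single coset-type constraint). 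Hence a good value always exists, provided $|a|$ is truncated at $1$ only where needed to keep the truncated region $\{|a|\ge1\}$ accumulating at $0$. For injectivity, note that $f(x)=f(x')$ with $x\neq x'$ forces $x'-x=\varphi(x)-\varphi(x')\in(-1,1)$. For $x,x'\in V_{\alpha+1}$ this is a countable family of linear conditions on the new value $t$, each excluding at most finitely many $t$, unless the condition is degenerate. Surjectivity is arranged by solving $x+\varphi(x)=y_\alpha$ inside the new coset $x\in b_\alpha+V_\alpha$. There the unknown $t=a(b_\alpha)$ may be chosen freely to make $\varphi(x)=y_\alpha-x$ for a suitable $x$ with $y_\alpha-x\in[0,1]$.

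The main obstacle is the injectivity bookkeeping in the degenerate cases. Two points in the same coset whose difference is already in $V_\alpha$ can collide independently of $t$, because on $\{|a|\ge1\}$ the value of $\varphi$ is the constant $1$. To handle this, I would first try replacing the hard truncation $\min(1,|a|)$ by a strictly increasing but bounded subadditive profile, e.g.\ $\varphi=\theta(|a|)$ with $\theta$ concave, $\theta(0)=0$, $\theta\uparrow1$. Then $\varphi$ is still subadditive, and collisions become non-degenerate equations in $t$ that exclude only countably many values at each stage. Meanwhile $\limsup_{x\to0}f(x)=\sup\theta=1$ is preserved, since $|a|$ is unbounded near $0$.
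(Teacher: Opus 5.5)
You follow the paper's architecture: $f(x)=x+\psi(T(x))$ with $T$ additive, built by a transfinite recursion of length $\mathfrak c$ that alternately enlarges the domain of a partial $\mathbb Q$-linear map and supplies a preimage for a new target. Each step excludes fewer than $\mathfrak c$ parameters; not ``countably many'', since $|V_\alpha|$ may be uncountable. Your reduction of the $\liminf$/$\limsup$ to discontinuity of $a$ is correct for, e.g., $\theta(s)=s/(1+s)$. It even shows that the paper's countable subspace $W_\star$ is dispensable, because an exotic graph is never the graph of a continuous additive map.

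\textbf{Missing invariant.} The proof rests on your sentence that with a strictly increasing, bounded, concave $\theta$ the collision equations ``become non-degenerate''. This is false unless you also maintain $a(u)\neq 0$ for $u\neq0$. The paper maintains the stronger exoticity condition $V\cap(\mathbb R\times\mathbb Q)=\{(0,0)\}$. Suppose that at some stage $a(u)=0$ with $u>0$, and the current domain contains $u_1$ with $0<u_1<u<f(u_1)$; such $u_1$ must eventually appear by density of the graph. Then $y:=f(u_1)-u>0$ can never receive a preimage. Indeed, $f(x)=y$ gives $f(x+u)=x+u+\theta(|a(x)|)=y+u=f(u_1)$ with $x+u>u_1$. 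This is an identity in the free parameter that no choice of value avoids. Creating a kernel element is not itself a collision, so your lists of bad values do nothing to prevent it.

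\textbf{Missing rigidity.} ``Finitely many solutions unless degenerate'' needs a rigid profile. For a general concave $\theta$ (e.g.\ piecewise linear and strictly increasing, with slopes tending to $0$), a collision equation between distinct points can hold on a whole interval of parameters. The paper fixes $\psi(v)=1-1/(1+|v|)$. Then on sign-constant intervals every collision is an equation between rational functions, and the identity case is ruled out by comparing poles and residues.

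\textbf{Target-hitting step.} This step is the most delicate, and you treat it as if the new value were still free and subject to linear conditions. In fact the new point is $p_t=(y-\psi(t),t)$, so its first coordinate moves with $t$. The map $t\mapsto\Phi(w_2+q_2p_t)$ can be constant only if the pole at $t=-1$ coming from $\psi(t)$ cancels the pole of $\psi(v_2+q_2t)$, i.e.\ $v_2=q_2-\varepsilon\in\mathbb Q$. Exoticity then gives $w_2=(0,0)$, so the constant side is $p_t$ itself, and $y\notin\Phi[W^+]$ excludes the collision. The obstruction above is exactly the excluded case $v_2=0\neq u_2$. In the domain-extension step, the identity case is instead killed by taking the new first coordinate outside $\pi_1[W]$.

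These two case analyses, Propositions~\ref{prop:technical1} and~\ref{prop:technical2}, are the substance of the proof. Your proposal asserts their conclusion without the invariant and the specific $\psi$ that make it true.
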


As it turns out from our proof, the constructed map $f$ satisfies 
$$
\forall x>0, \qquad x<f(x)<x+1,
$$
cf. Section \ref{sec:final}. In particular, since $f^{-1}(x)\le x$, it follows also that $\lim_{x\to 0}f^{-1}(x)=0$.

It is worth remarking that a related result appeared recently in \cite[Theorem 1.7]{MR4993440}, where it has been shown that there exists a subadditive bijection $f: H\to H$ satisfying \eqref{eq:mirkclaim} whenever $H$ is a positive rational cone generated by a $\mathbb{Q}$-linearly independent infinite subset of $(0,\infty)$. 
Although both arguments use $\mathbb Q$-linear independence to control possible additive
relations, our strategy of construction is substantially different. 
In fact, in \cite[Theorem 1.7]{MR4993440} the map is obtained by modifying countably many rational rays while leaving the complement fixed. 
On the other hand, in our case, the domain is the whole interval $(0,\infty)$, and the main object is an \textquotedblleft exotic\textquotedblright\, $\mathbb Q$-linear subspace $V$ of $\mathbb{R}^2$ (see Definition \ref{def:exotic} below) which is forced, by a transfinite construction, to be the graph of an additive function $T$ and to make an explicit map $\Phi$ bijective from a certain subset of $V$ to the whole $(0,\infty)$. Thus subadditivity here comes from the additivity of $T$ and the subadditivity of each section of $\Phi$, rather than from piecewise-linear concave maps on rays of $H$.

\section{Proof of Theorem \ref{thm:mainjanusz}}

Throughout, define the maps $\psi: \mathbb{R}\to \mathbb{R}$ and $\Phi:\mathbb R^2\to\mathbb R$ by 
\[
\psi(v):=\frac{|v|}{1+|v|}
\qquad \text{ and }\qquad 
\Phi(u,v):=u+\psi(v)
\]
for all $v \in \mathbb{R}$ and $(u,v) \in \mathbb{R}^2$. 
Let also $\pi_1:\mathbb R^2\to\mathbb R$ be the first-coordinate projection. 

For each $\mathbb Q$-linear subspace $V$ of $\mathbb R^2$, define
\[
V^+:=V\cap((0,\infty)\times\mathbb R).
\]

\begin{defi}\label{def:exotic}
A $\mathbb Q$-linear subspace $V$ of $\mathbb R^2$ is said to be \emph{exotic} if
\[
V\cap(\{0\}\times\mathbb R)=\{(0,0)\}
\quad \text{ and }\quad 
V\cap(\mathbb R\times\mathbb Q)=\{(0,0)\}.
\]
\end{defi}

In the proof of Theorem \ref{thm:mainjanusz}, we will need the following two intermediate results.

\begin{prop}\label{prop:technical1}
Let $W$ be a $\mathbb Q$-linear exotic subspace of $\mathbb R^2$ such that
$|W|<\mathfrak c$. Assume that $\Phi$ is one-to-one on $W^+$.

Fix $a\in\mathbb R\setminus\pi_1[W]$ and, for each $b\in\mathbb R$, define 
$$
W_b:=W+\mathbb Q(a,b).
$$ 
Then
\[
B_a:=
\left\{
b\in\mathbb R:
W_b\text{ is exotic and }\Phi\text{ is one-to-one on }W_b^+
\right\}
\]
is dense in $\mathbb R$.
\end{prop}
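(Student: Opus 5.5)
We will prove the stronger statement that $\mathbb R\setminus B_a$ has cardinality $<\mathfrak c$. Since every nonempty open interval has cardinality $\mathfrak c$, this gives density of $B_a$. The plan is to list the conditions under which a given $b$ fails, and to show that each single condition is violated by only finitely or countably many $b$. The number of conditions is at most $|W|\cdot\aleph_0<\mathfrak c$. We also use that $|W|\cdot\aleph_0<\mathfrak c$ holds whether $W$ is finite or infinite.

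Every element of $W_b$ has the form $w+q(a,b)$ with $w=(w_1,w_2)\in W$ and $q\in\mathbb Q$.

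\emph{First exoticity condition.} It holds for every $b$. Suppose $w_1+qa=0$ with $q\neq 0$. Then $a=-w_1/q\in\pi_1[W]$, which contradicts the choice of $a$. Hence $q=0$, and then $w\in W\cap(\{0\}\times\mathbb R)=\{(0,0)\}$.

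\emph{Second exoticity condition.} Take an element with $q=0$. It lies in $W$, and the condition holds there because $W$ is exotic. Now take $q\neq0$. Membership of $w+q(a,b)$ in $\mathbb R\times\mathbb Q$ forces $b\in (r-w_2)/q$ for some $r\in\mathbb Q$. So the bad $b$ lie in a set of size at most $|W|\cdot\aleph_0$.

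\emph{Injectivity of $\Phi$ on $W_b^+$.} Fix a tuple $(w,q,w',q')$ with $(w,q)\neq(w',q')$. The pair $(w,q)$ determines the element of $W_b$ uniquely, because $a\notin\pi_1[W]$. We must exclude the equation
\[
F(b):=(w_1-w_1')+(q-q')a+\psi(w_2+qb)-\psi(w_2'+q'b)=0 .
\]
\begin{itemize}
\item If $q=q'=0$, the equation is excluded by the hypothesis that $\Phi$ is one-to-one on $W^+$, or, if the two elements of $W$ are equal, because then $(w,q)=(w',q')$.
\item Otherwise, $\mathbb R$ splits into at most three open intervals, plus finitely many endpoints, on which the signs $\sigma,\sigma'$ of $w_2+qb$ and $w_2'+q'b$ are constant. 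On each such interval we use $\psi(t)=1-1/(1+|t|)$, so that $F$ is a rational function of $b$. Hence $F$ either has finitely many zeros there or vanishes identically there.
\end{itemize}

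The main obstacle is ruling out identical vanishing on an interval.
\begin{itemize}
\item If exactly one of $q,q'$ is zero, identical vanishing would make a nonconstant Möbius function $b\mapsto 1/(1+\sigma(w_2+qb))$ constant, which is impossible.
\item If both are nonzero, the difference of the two Möbius functions $1/(1+\sigma(w_2+qb))$ and $1/(1+\sigma'(w_2'+q'b))$ would be constant. Extending analytically, the two functions must then have the same pole and the same residue. Equal residues give $\sigma q=\sigma'q'$. Equal poles then give $w_2=\sigma\sigma' w_2'$, i.e. $w_2+qb=\sigma\sigma'(w_2'+q'b)$ identically.
\end{itemize}
In the second case the two $\psi$-terms coincide, so identical vanishing forces $w_1+qa=w_1'+q'a$. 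There are two subcases.
\begin{itemize}
\item If $q'=q$, then $w_1=w_1'$ and $w_2=w_2'$, so the tuple represents the same element, which was excluded.
\item If $q'=-q\neq0$, then $w_1-w_1'+2qa=0$, so $a\in\pi_1[W]$, which is a contradiction.
\end{itemize}

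Thus each tuple excludes only finitely many $b$, and the number of tuples is at most $|W|\cdot\aleph_0<\mathfrak c$. Combining the two exoticity conditions with injectivity, $|\mathbb R\setminus B_a|<\mathfrak c$, and $B_a$ is dense.
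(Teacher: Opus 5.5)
Your proposal is correct and follows essentially the same route as the paper: both show that fewer than $\mathfrak c$ values of $b$ are bad, split by which of $q,q'$ vanish, and in the case where the equation holds identically on an interval both force the two affine denominators to coincide before using $a\notin\pi_1[W]$ to conclude that the two points are equal. The differences are cosmetic: you bound $\mathbb R\setminus B_a$ globally rather than inside each interval $J$, and you compare poles and residues where the paper observes that $(D_2-D_1)/(D_1D_2)$ is nonconstant. You also treat $q'=-q$ as a separate subcase, whereas the paper reads $q_1=q_2$ directly off $(u_1-u_2)+(q_1-q_2)a=0$.
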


\begin{proof}
We need to show that every nonempty open interval contains a point of $B_a$. Fix a nonempty open
interval $J\subseteq\mathbb R$ and, for each $b\in J$, define $p_b:=(a,b)$.
We shall show that fewer than $\mathfrak c$ values of $b\in J$ do not belong to $B_a$. 

\medskip

\textbf{Exoticity.} 
Let us start checking the first exoticity condition for $W_b$ from Definition \ref{def:exotic}. 
Pick $w\in W$ and $q\in\mathbb Q$ and suppose that 
$
w+qp_b\in\{0\}\times\mathbb R. 
$ 
Taking first coordinates gives 
$
\pi_1(w)+qa=0.
$ 
If $q\ne0$, then
\[
a=-\frac{\pi_1(w)}q\in\pi_1[W],
\]
which contradicts the choice of $a$. Hence $q=0$, and then 
$
w\in W\cap(\{0\}\times\mathbb R)=\{(0,0)\}.
$ 
Thus no nonzero vertical vector is ever created.

\medskip

Next, we consider the second exoticity condition for $W_b$ from Definition \ref{def:exotic}. 
If it fails, it would mean
that, for some $w=(u,v)\in W$, $q\in\mathbb Q$, and $r\in\mathbb Q$, one has
\[
v+qb=r.
\]
If $q=0$, then $v=r\in\mathbb Q$, and hence 
$
w\in W\cap(\mathbb R\times\mathbb Q)=\{(0,0)\}.
$ 
So this gives no nonzero violation. If $q\ne0$, then 
$
b=(r-v)/q.
$ 
For each fixed triple $(w,q,r)\in W\times\mathbb Q\times\mathbb Q$, this excludes at most
one value of $b$. Since 
$
|W\times\mathbb Q\times\mathbb Q|<\mathfrak c,
$ 
fewer than $\mathfrak c$ values of $b\in J$ are excluded by this condition.

\medskip

\textbf{Injectivity.} It remains to preserve injectivity of $\Phi$ on $W_b^+$. For each $i \in \{1,2\}$, fix $w_i=(u_i,v_i)\in W$ and $q_i\in\mathbb Q$, 
and suppose that 
\begin{equation}\label{eq:collision}
\Phi(w_1+q_1p_b)=\Phi(w_2+q_2p_b).
\end{equation}
Equivalently,
\[
u_1+q_1a+\psi(v_1+q_1b)
=
u_2+q_2a+\psi(v_2+q_2b).
\]

\medskip

\textsc{Case 1.} 
First, if $q_1=q_2=0$, then \eqref{eq:collision} is simply $\Phi(w_1)=\Phi(w_2)$. 
If both points belong to $W^+$, the assumed injectivity of $\Phi$ on $W^+$ gives
$w_1=w_2$. Thus no new positive collision is created.

\medskip

\textsc{Case 2.} 
Now, suppose that exactly one of $q_1,q_2$ is zero, let us say $q_1=0$ and $q_2\ne0$. Add,
if necessary, the single point of $J$ where 
$
v_2+q_2b=0
$ 
to the forbidden set $J\setminus B_a$, and split the remaining part of $J$ into finitely many open
intervals on which $v_2+q_2b$ has constant sign. On each such interval, both sides of
the preceding equation are rational functions of $b$. The left-hand side is constant,
whereas the right-hand side contains the nonconstant term $\psi(v_2+q_2b)$. Thus the
equation cannot be an identity of rational functions, and hence it has only finitely many
solutions on each such interval. The case $q_2=0$ and $q_1\ne0$ is analogous.

\medskip

\textsc{Case 3.} 
It remains to consider the case $q_1q_2\ne0$. Split $J$ into finitely many open intervals
on which both affine functions
\[
b\mapsto v_1+q_1b
\quad \text{ and }\quad 
b\mapsto v_2+q_2b
\]
have constant sign. The finitely many endpoints, where one of these affine functions
vanishes, may also be added to the forbidden set  $J\setminus B_a$. 
On each remaining interval, both sides
of \eqref{eq:collision} are rational functions of $b$. Hence the equation has only
finitely many solutions on such an interval unless it is an identity of rational functions.

To complete the proof, we now discuss the identity case. On one of the intervals of fixed signs, for each $i \in \{1,2\}$ choose
$\varepsilon_i\in\{-1,1\}$ so that $\varepsilon_i(v_i+q_ib)>0$. 
Then
\[
\psi(v_i+q_ib)
=
1-\frac1{1+\varepsilon_i(v_i+q_ib)}.
\]
Thus the identity has the form
\[
A_1-\frac1{D_1(b)}
=
A_2-\frac1{D_2(b)},
\]
where
$
A_i:=u_i+q_ia+1$ and $
D_i(b):=1+\varepsilon_i(v_i+q_ib)$. 
If $D_1\ne D_2$, then
\[
A_1-A_2
=
\frac1{D_1(b)}-\frac1{D_2(b)}
=
\frac{D_2(b)-D_1(b)}{D_1(b)D_2(b)},
\]
which is a nonconstant rational function of $b$, a contradiction. Hence
$
D_1=D_2
$ and 
$A_1=A_2$. 
From $D_1=D_2$ we obtain
\[
\varepsilon_1q_1=\varepsilon_2q_2,
\qquad
\varepsilon_1v_1=\varepsilon_2v_2,
\]
while $A_1=A_2$ gives
\[
(u_1-u_2)+(q_1-q_2)a=0.
\]
Since $u_1-u_2\in\pi_1[W]$ and $a\notin\pi_1[W]$, it follows that $q_1=q_2$, 
and then $u_1=u_2$. Therefore $\varepsilon_1=\varepsilon_2$ and $v_1=v_2$. Hence
$w_1=w_2$ and $q_1=q_2$.

Consequently, for each fixed quadruple $(w_1,w_2,q_1,q_2)$, we need to exclude only finitely many values of $b\in J$. Since 
$ 
|W\times W\times\mathbb Q\times\mathbb Q|<\mathfrak c,
$ 
fewer than $\mathfrak c$ values of $b\in J$ are excluded in total from $B_a$. 

\medskip

Combining the two exoticity conditions and the injectivity condition \eqref{eq:collision}, we have excluded fewer than $\mathfrak c$ values of $b\in J$. Since $|J|=\mathfrak c$, we conclude that $B_a\cap J\neq \emptyset$, i.e., there exists
$b\in J$ such that $W_b$ is exotic and $\Phi$ is one-to-one on $W_b^+$. As $J$ was arbitrary,
$B_a$ is dense in $\mathbb R$.
\end{proof}

\begin{prop}\label{prop:technical2}
Let $W$ be a $\mathbb Q$-linear exotic subspace of $\mathbb R^2$ such that
$|W|<\mathfrak c$. Assume that $\Phi$ is one-to-one on $W^+$.

Fix $y\in(0,\infty)\setminus\Phi[W^+]$. Then there exists $p\in(0,\infty)^2$ such that
$\Phi(p)=y$ and
\[
W_p:=W+\mathbb Qp
\]
is an exotic $\mathbb Q$-linear subspace such that $\Phi$ is one-to-one on $W_p^+$.
\end{prop}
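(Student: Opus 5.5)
We look for $p=(a,b)$ with $b>0$ and $a=y-\psi(b)$, so that $\Phi(p)=y$ automatically. Since $\psi(b)\in(0,1)$, this requires $a>0$, i.e.\ $\psi(b)<y$. This holds for all $b\in(0,\beta)$, where $\beta:=\psi^{-1}(\min\{y,1/2\})>0$. Hence $p=(a,b)\in(0,\infty)^2$ for every $b$ in the interval $I:=(0,\beta)$. The plan is to choose $b\in I$ outside a set of cardinality $<\mathfrak c$, essentially rerunning the argument of Proposition \ref{prop:technical1} along the curve $b\mapsto p_b:=(y-\psi(b),b)$ instead of along a vertical line.

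First I ensure $a\notin\pi_1[W]$. The map $b\mapsto y-\psi(b)=y-b/(1+b)$ is strictly decreasing on $I$, hence injective. So each element of $\pi_1[W]$ excludes at most one $b$, and $|\pi_1[W]|<\mathfrak c$. Once $a\notin\pi_1[W]$ is secured, the first exoticity condition for $W_p$ follows exactly as in Proposition \ref{prop:technical1}. For the second condition, a violation means $v+qb=r$ with $q\neq0$ (the case $q=0$ is excluded by exoticity of $W$). Each triple $(w,q,r)$ then excludes one value of $b$. Fewer than $\mathfrak c$ values are excluded in total.

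For injectivity on $W_p^+$, consider a collision $\Phi(w_1+q_1p_b)=\Phi(w_2+q_2p_b)$ and substitute $a=y-\psi(b)=y-1+1/(1+b)$ for $b\in I$. For a fixed quadruple $(w_1,w_2,q_1,q_2)$, both sides are rational functions of $b$ on each of the finitely many subintervals where $v_i+q_ib$ has constant sign. Away from the identity case, each quadruple excludes only finitely many $b$. Case $q_1=q_2=0$ is handled by the injectivity hypothesis on $W^+$.

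The main obstacle is the identity case, since $\psi(b)$ now also enters through the first coordinate. I would write the difference of the two sides as a linear combination
\[
C+\frac{q_1-q_2}{1+b}-\frac{1}{D_1(b)}+\frac{1}{D_2(b)},
\]
where $C$ is a constant, $D_i(b)=1+\varepsilon_i(v_i+q_ib)$, and the $1/D_i$ terms appear only when $q_i\neq0$ (for $q_i=0$ they are constants absorbed into $C$). Partial fractions over distinct poles force cancellation. The pole at $b=-1$ can only cancel against a $D_i$ with root $-1$, i.e.\ $\varepsilon_i(v_i-q_i)=0$. Since $q_i\ne0$ and $v_i$ is irrational or zero, this gives $v_i=0$ and $\varepsilon_i q_i=1$; this leaves only finitely many configurations, which I would check by hand. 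In the generic case, $q_1=q_2$ and then $D_1=D_2$, which forces $\varepsilon_1 v_1=\varepsilon_2 v_2$ and $\varepsilon_1=\varepsilon_2$. Finally $C=0$ gives $u_1=u_2$ through the same $\pi_1[W]$ argument. One subtlety is that the $v_i$ may be rational combinations with $v_i=0$ allowed only for $w_i=0$; I expect exoticity to dispose of the degenerate subcases.

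Altogether fewer than $\mathfrak c$ values of $b\in I$ are excluded, while $|I|=\mathfrak c$, so a good $b$, and hence the required $p$, exists.
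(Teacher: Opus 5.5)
Your strategy is the paper's own: the same curve $p_b=(y-\psi(b),b)$, the same count of fewer than $\mathfrak c$ excluded parameters, and the same partial-fraction analysis after writing $y-\psi(b)=y-1+\frac{1}{1+b}$. The gap is in the exceptional configurations you defer (``finitely many configurations, which I would check by hand''). That is exactly where the hypothesis $y\notin\Phi[W^+]$ is needed, and your proposal never invokes it.

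When $D_i(-1)=0$, exoticity gives $w_i=0$ and $q_i=\varepsilon_i=\pm1$. (Note that $D_i(-1)=0$ means $\varepsilon_i(v_i-q_i)=-1$, not $0$ as you wrote; with $0$ you would get $v_i=q_i\in\mathbb Q\setminus\{0\}$, which is impossible.) For $q_i=1$ the point is $p_b$ itself, and $\Phi(p_b)\equiv y$ is constant in $b$. So a collision between $p_b$ and an old point $w_j\in W^+$ (the case $q_j=0$) is an identity in $b$ whenever $\Phi(w_j)=y$. It then holds for every $b\in I$, and excluding finitely many values of $b$ cannot remove it; only the assumption $y\notin\Phi[W^+]$ rules it out. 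This is also not a finite check, since the other side $(w_j,q_j)$ ranges over all of $W\times\mathbb Q$.

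To close the gap, argue as follows. If one side is $p_b$, the other side must be identically $y$.
\begin{itemize}
\item If $q_j=0$, this says $\Phi(w_j)=y$ with $w_j\in W^+$, which contradicts the hypothesis.
\item If $q_j\neq0$, the pole of $\frac{q_j}{1+b}$ at $b=-1$ must cancel. Hence $D_j(-1)=0$, and again $w_j=0$ and $q_j=\varepsilon_j=\pm1$. Now $-p_b$ has negative first coordinate, and $\Phi(-p_b)=2-y-\frac{2}{1+b}$ is not constant, so the other side is $p_b$ as well.
\end{itemize}
The case $q_i=-1$ gives $-p_b\notin W_p^+$ and is irrelevant. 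With this added, your generic case finishes the proof as in the paper: $q_1=q_2$, then $D_1=D_2$, then $C=u_1-u_2=0$. That last step needs no $\pi_1[W]$ argument, since $q_1=q_2$ already.
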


\begin{proof}
For each $t>0$, put
$$
p_t:=(y-\psi(t),t).
$$
Define also the nonempty open interval $I_y:=\{t\in (0,\infty): y>\psi(t)\}$. Observe that $p_t\in(0,\infty)^2$ for every $t\in I_y$, and $\Phi(p_t)=y$. As in the previous proof, 
we shall show that fewer than $\mathfrak c$ values of $t\in I_y$ are forbidden for the claimed property of $W_{p_t}$. 

\medskip

\textbf{Exoticity.} 
Let us check the first exoticity condition for $W_{p_t}$ from Definition \ref{def:exotic}. Suppose that 
$ 
w+qp_t\in\{0\}\times\mathbb R
$ 
for some $w=(u,v)\in W$ and $q\in\mathbb Q$. Taking first coordinates gives
\[
u+q\left(y-\psi(t)\right)=0.
\]
If $q=0$, then $u=0$, and so $w\in W\cap(\{0\}\times\mathbb R)=\{(0,0)\}$. 
If $q\ne0$, the preceding equation determines at most one value of $t\in I_y$ (since $\psi$ is strictly increasing on $(0,\infty)$). Hence fewer
than $\mathfrak c$ values of $t$ are excluded by the first condition.

Next, we consider the second exoticity condition for $W_{p_t}$ from Definition \ref{def:exotic}. A violation would mean that, for some
$w=(u,v)\in W$, $q\in\mathbb Q$, and $r\in\mathbb Q$, one has
\[
v+qt=r.
\]
With the same reasoning as in the proof of Proposition \ref{prop:technical1}, 
fewer than $\mathfrak c$ values of $t\in I_y$ are excluded by this second condition. 

\medskip

\textbf{Injectivity.} It remains to preserve injectivity of $\Phi$ on $W_{p_t}^+$. For each $i \in \{1,2\}$, fix
$w_i=(u_i,v_i)\in W$ and $q_i\in\mathbb Q$, and consider the equation
\begin{equation}\label{eq:collision2}
\Phi(w_1+q_1p_t)=\Phi(w_2+q_2p_t).
\end{equation}

\medskip

\textsc{Case 1.} If $q_1=q_2=0$, then \eqref{eq:collision2} is simply 
$
\Phi(w_1)=\Phi(w_2).
$ 
If both points belong to $W^+$, the assumed injectivity of $\Phi$ on $W^+$ gives
$w_1=w_2$. Thus no new positive collision is created.

\medskip

\textsc{Case 2.} 
Now, suppose that 
$q_1=0$ and $q_2\ne0$.
Add, if necessary, the single point of $I_y$ where 
$
v_2+q_2t=0
$ 
to the forbidden set, and split the remaining part of $I_y$ into finitely many open
intervals on which $v_2+q_2t$ has constant sign. On each such interval, both sides of
\eqref{eq:collision2} are rational functions of $t$. As in the proof of Proposition \ref{prop:technical1}, the equation has only finitely
many solutions on such an interval unless it is an identity of rational functions.

We now discuss the identity case for this latter case: since $q_1=0$ and $q_2\ne0$, from \eqref{eq:collision2} 
we have to understand when the function
\[
t\mapsto \Phi(w_2+q_2p_t)
\]
can be constant. On one of the intervals of fixed sign, choose
$\varepsilon\in\{-1,1\}$ so that 
$
\varepsilon(v_2+q_2t)>0.
$ 
Then
\begin{equation}\label{eq:representationPhi}
\begin{split}
\Phi(w_2+q_2p_t)
&=
u_2+q_2\left(y-1+\frac1{1+t}\right)
+
1-\frac1{1+\varepsilon(v_2+q_2t)}
\\
&=
u_2+q_2(y-1)+1+\frac{q_2}{1+t}
-\frac1{1+\varepsilon(v_2+q_2t)}.
\end{split}
\end{equation}
If this function is constant, the pole at $t=-1$ must be cancelled by the second
denominator. Hence
\[
1+\varepsilon(v_2-q_2)=0,
\]
or equivalently,
$
v_2=q_2-\varepsilon\in\mathbb Q$. 
Since $W$ is exotic, this gives $w_2=(0,0)$, and therefore $v_2=0$. Thus $q_2=\varepsilon$. 
If $q_2=1$ and $\varepsilon=1$, then $w_2+q_2p_t=p_t$, and therefore
$$\Phi(w_2+q_2p_t)=\Phi(p_t)=y$$ for every $t\in I_y$. 
If $q_2=-1$ and $\varepsilon=-1$, then
\[
\Phi(-p_t)=2-y-\frac2{1+t},
\]
which is not constant.

Therefore, in the case where $q_1=0$ and $q_2\ne0$, an identity can affect injectivity on
$W_{p_t}^+$ only if $w_2+q_2p_t$ is precisely $p_t$. In that case 
identity \eqref{eq:collision2} becomes $\Phi(w_1)=\Phi(p_t)=y$. 
Since this identity affects injectivity on $W_{p_t}^+$ only when both points belong to
$W_{p_t}^+$, then $w_1 \in W^+$. Hence we would have
\[
w_1\in W^+
\qquad\text{and}\qquad
\Phi(w_1)=y,
\]
which contradicts
$y\notin\Phi[W^+]$. Thus, no new bad cases are created in the case $q_1=0$ and $q_2\ne0$. 
The case $q_2=0$ and $q_1\ne0$ is analogous.

\medskip

\textsc{Case 3}. 
It remains to consider the case $q_1q_2\ne0$. 
As in the proof of Proposition \ref{prop:technical1}, split $I_y$ into finitely many open intervals
on which both $t\mapsto v_1+q_1t$ and $t\mapsto v_2+q_2t$ have constant sign. Add to the forbidden set those finitely many endpoints where these affine functions vanish. On each remaining interval, both sides
of \eqref{eq:collision2} are rational functions of $t$. Hence the equation has only
finitely many solutions on such an interval unless it is an identity of rational functions.

Hence, to complete the proof, we discuss the identity case. On one of the intervals of fixed signs, for each $i\in\{1,2\}$ choose
$\varepsilon_i\in\{-1,1\}$ so that $\varepsilon_i(v_i+q_it)>0$, and put 
$
D_i(t):=1+\varepsilon_i(v_i+q_it).
$ 
Then by \eqref{eq:representationPhi} we have 
\[
\begin{split}
\Phi(w_i+q_ip_t)
&
=
u_i+q_i(y-1)+1+\frac{q_i}{1+t}-\frac1{D_i(t)}.
\end{split}
\]

There is one exceptional possibility, namely, that the zero of $D_i$ coincides with
$t=-1$. This means 
$ 
D_i(-1)=0,
$ 
or equivalently,
\[
1+\varepsilon_i(v_i-q_i)=0.
\]
Thus 
$
v_i=q_i-\varepsilon_i\in\mathbb Q.
$ 
Since $W$ is exotic, we get $w_i=(0,0)$, hence $v_i=0$ and $q_i=\varepsilon_i$. Therefore
only two cases can occur. If 
$
q_i=\varepsilon_i=1
$ 
then $w_i+q_ip_t=p_t$ and $\Phi(w_i+q_ip_t)= y$. 
If 
$
q_i=\varepsilon_i=-1
$ 
then $w_i+q_ip_t=-p_t$, which has negative first coordinate for every $t\in I_y$.

Hence, for a genuine positive collision, the only constant exceptional case is the point
$p_t$. If one side is $p_t$, then the other side must also be identically equal to $y$.
By the preceding paragraph, this forces the other side to be the same point $p_t$ as well.
Thus no collision between distinct positive points is created by the exceptional cases.

We are left with the case where neither $D_1$ nor $D_2$ vanishes at $t=-1$. Then each
side has two distinct possible poles: one at $t=-1$, coming from the term
\[
\frac{q_i}{1+t},
\]
and one at the zero of $D_i(t)$. If the two rational functions are identical, then their
poles and residues must agree. Comparing the residues at $t=-1$ gives 
$
q_1=q_2.
$ 
Comparing the other pole and its residue gives $\varepsilon_1q_1=\varepsilon_2q_2$ and $\varepsilon_1v_1=\varepsilon_2v_2$. 
Since $q_1=q_2\ne0$, it follows that 
$\varepsilon_1=\varepsilon_2$ and $v_1=v_2$.
Finally, comparing the constant terms gives 
$
u_1=u_2.
$
Therefore
\[
w_1=w_2
\quad\text{and}\quad
q_1=q_2.
\]
So the two points $w_1+q_1p_t$ and $w_2+q_2p_t$ were identical.

Consequently, for each fixed quadruple $(w_1,w_2,q_1,q_2)$, we need to exclude only
finitely many values of $t\in I_y$. Since $|W\times W\times\mathbb Q\times\mathbb Q|<\mathfrak c$, 
fewer than $\mathfrak c$ values of $t\in I_y$ are excluded in total.

\medskip

Combining the two exoticity conditions and the injectivity condition \eqref{eq:collision2},
we have excluded fewer than $\mathfrak c$ values of $t\in I_y$. Since $|I_y|=\mathfrak c$,
there exists $t\in I_y$ such that, with $p:=p_t$, the space 
$
W_p:=W+\mathbb Qp
$ 
is exotic and $\Phi$ is one-to-one on $W_p^+$. By construction, $p\in(0,\infty)^2$ and
$\Phi(p)=y$. This completes the proof.
\end{proof}

We are finally ready to prove Theorem~\ref{thm:mainjanusz}. 
\begin{proof}[Proof of Theorem~\ref{thm:mainjanusz}]
Let us start noting that $\psi$ is subadditive. Indeed, for $a,b\ge0$,
\[
\psi(a)+\psi(b)-\psi(a+b)
=
\frac{ab(2+a+b)}{(1+a)(1+b)(1+a+b)}
\ge0.
\]
Since $\psi$ is increasing on $[0,\infty)$ and $|s+t|\le |s|+|t|$, it follows
that
\[
\forall s,t\in\mathbb R,\qquad
\psi(s+t)\le\psi(s)+\psi(t).
\]

\medskip

\textbf{A special countable exotic subspace.} We build a countable exotic space which will force the desired oscillation at the
origin. To this aim, let 
\[
W_\star^{(0)}:=\{(0,0)\}.
\]
and define the sequence 
$(W_\star^{(n)}: n \in \mathbb{N})$ 
by induction on $n$ as it follows. 

\smallskip

(i) 
Suppose that $W_\star^{(2n-2)}$ has already been constructed for some integer $n\ge 1$. Pick 
\[
a_n^+\in(0,\nicefrac{1}{n})\setminus\pi_1[W_\star^{(2n-2)}],
\]
which is possible since $|\pi_1[W_\star^{(2n-2)}]|<\mathfrak c$. By
Proposition~\ref{prop:technical1}, applied with $a=a_n^+$, the set of admissible parameters
is dense in $\mathbb R$. Hence we may choose 
$
t_n^+\in(n,\infty)
$ 
such that, putting
\[
W_\star^{(2n-1)}:=W_\star^{(2n-2)}+\mathbb Q(a_n^+,t_n^+),
\]
the space $W_\star^{(2n-1)}$ is exotic and $\Phi$ is one-to-one on
$(W_\star^{(2n-1)})^+$. 

\smallskip

(ii) 
Next, suppose that $W_\star^{(2n-1)}$ has already been constructed for some integer $n\ge 1$. Similarly, pick some 
\[
a_n^-\in(0,\nicefrac{1}{n})\setminus\pi_1[W_\star^{(2n-1)}].
\]
Again by Proposition~\ref{prop:technical1}, applied with $a=a_n^-$, we may choose 
$
t_n^-\in(-\nicefrac{1}{n},\nicefrac{1}{n})
$ 
such that, putting
\[
W_\star^{(2n)}:=W_\star^{(2n-1)}+\mathbb Q(a_n^-,t_n^-),
\]
the space $W_\star^{(2n)}$ is exotic and $\Phi$ is one-to-one on $(W_\star^{(2n)})^+$. 

\smallskip

Accordingly, define 
\[
W_\star:=\bigcup_{n\in\mathbb N}W_\star^{(n)}.
\]
Then $W_\star$ is a countable exotic $\mathbb Q$-linear subspace of $\mathbb R^2$ and $\Phi$ is
one-to-one on $W_\star^+$. In addition, for every positive integer $n$, we have $(a_n^+,t_n^+)\in W_\star$ and 
$(a_n^-,t_n^-)\in W_\star$, where 
\begin{equation}\label{eq:uglyconstraints}
a_n^+,a_n^- \in (0,\nicefrac{1}{n}), \quad 
t_n^+ \in (n,\infty), \quad \text{ and }\quad 
t_n^- \in (-\nicefrac{1}{n},\nicefrac{1}{n}).
\end{equation}
%
%


\medskip

\textbf{A transfinite chain of exotic subspaces}. 
Fix two enumerations
\[
(0,\infty)=\{y_\alpha:\alpha<\mathfrak c\}
\quad \text{ and }\quad 
\mathbb R=\{r_\alpha:\alpha<\mathfrak c\}.
\]
We construct, by transfinite recursion, an increasing chain 
$
(W_\alpha:\alpha\le\mathfrak c)
$ 
of exotic $\mathbb Q$-linear subspaces of $\mathbb R^2$ such that $\Phi$ is one-to-one on
$W_\alpha^+$ for every $\alpha\le\mathfrak c$.

\medskip

To this end, put
\[
W_0:=W_\star.
\]

\smallskip

At a limit ordinal $\lambda\le\mathfrak c$, set
\[
W_\lambda:=\bigcup_{\alpha<\lambda}W_\alpha.
\]
Since the chain is increasing, $W_\lambda$ is a $\mathbb Q$-linear subspace. Moreover,
$W_\lambda$ is exotic, because any vector of $W_\lambda$ lying in
$\{0\}\times\mathbb R$ or in $\mathbb R\times\mathbb Q$ already lies in some earlier
$W_\alpha$. Similarly, $\Phi$ is one-to-one on $W_\lambda^+$, because any collision between
two points of $W_\lambda^+$ already occurs in some $W_\alpha^+$.

\smallskip

Suppose now that $W_\alpha$ has been constructed. First, we force the value $y_\alpha$. If 
$
y_\alpha\in\Phi[W_\alpha^+],
$ 
put 
$$
W_\alpha':=W_\alpha. 
$$ 
Otherwise, by Proposition~\ref{prop:technical2}, it is possible to choose a point 
$
p_\alpha\in(0,\infty)^2
$ 
such that 
$
\Phi(p_\alpha)=y_\alpha
$ 
and such that
\[
W_\alpha':=W_\alpha+\mathbb Qp_\alpha
\]
is exotic and $\Phi$ is one-to-one on $(W_\alpha')^+$. 

Next, we force the first coordinate $r_\alpha$. If 
$
r_\alpha\in\pi_1[W_\alpha'],
$ 
put
\[
W_{\alpha+1}:=W_\alpha'.
\]
Otherwise, by Proposition~\ref{prop:technical1}, applied with $a=r_\alpha$, we may choose
$b_\alpha\in\mathbb R$ such that, putting 
$
q_\alpha:=(r_\alpha,b_\alpha),
$ 
the space
\[
W_{\alpha+1}:=W_\alpha'+\mathbb Qq_\alpha
\]
is exotic and $\Phi$ is one-to-one on $W_{\alpha+1}^+$. (It is worth observing that, at every stage $\alpha<\mathfrak c$, the space $W_\alpha$ is generated over $\mathbb Q$ by
fewer than $\mathfrak c$ many vectors; hence 
$
|W_\alpha|<\mathfrak c,
$ 
and Propositions~\ref{prop:technical1} and~\ref{prop:technical2} are indeed applicable.) 

\medskip

\textbf{A maximal exotic subspace as graph of an additive map.} 
Finally, we set
\[
V:=W_{\mathfrak c}.
\]
Then $V$ is an exotic $\mathbb Q$-linear subspace of $\mathbb R^2$, and $\Phi$ is one-to-one
on $V^+$. Moreover,
\[
\pi_1[V]=\mathbb R.
\]
Indeed, if $r\in\mathbb R$, then $r=r_\alpha$ for some $\alpha<\mathfrak c$, and at stage
$\alpha$ either $r_\alpha$ already belonged to $\pi_1[W_\alpha']$, or a vector with first
coordinate $r_\alpha$ was adjoined. Similarly,
\begin{equation}\label{eq:conditionVplus}
\Phi[V^+]=(0,\infty),
\end{equation}
because every $y_\alpha\in(0,\infty)$ was considered and, if it was not already attained,
a positive point with $\Phi$-value $y_\alpha$ was adjoined.

Since $V$ is exotic $\mathbb{Q}$-linear subspace of $\mathbb{R}^2$ and $\pi_1[V]=\mathbb R$, it follows that $V$ is the graph of a unique 
additive function
\[
T:\mathbb R\to\mathbb R.
\]
In fact, $\pi_1[V]=\mathbb{R}$ implies that $V\cap (\{x\}\times \mathbb{R})\neq \emptyset$ for all $x \in \mathbb{R}$. In addition, given $x,y,y^\prime \in \mathbb{R}$ such that $(x,y),(x,y^\prime) \in V$, we have $(x,y)-(x,y^\prime) \in V$, hence by the exoticity of $V$ we obtain $y=y^\prime$. It follows that it is possible to define $T: \mathbb{R}\to \mathbb{R}$ such that 
\begin{equation}\label{eq:VgraphT}
V=\mathrm{Graph}(T).
\end{equation}
Lastly, $T$ is additive. In fact, pick $(x, T(x)), (y,T(y)) \in V$ and note that $(x,T(x))+(y,T(y)) \in V$. Since the latter vector coincides with $(x+y,T(x+y))$, it follows that $T(x+y)=T(x)+T(y)$.

\medskip

\textbf{Final construction.} 
To conclude our proof, define the map $f: (0,\infty) \to (0,\infty)$ by 
$$
\forall x>0,\qquad
f(x):=\Phi(x,T(x)).
$$
We claim that $f$ witnesses the claimed properties.

\medskip

\textsc{Bijectivity of $f$.} 
Thanks to \eqref{eq:VgraphT} and \eqref{eq:conditionVplus}, we have
\[
f[(0,\infty)]=\Phi[\mathrm{Graph}(T)^+]=\Phi[V^+]=(0,\infty), 
\]
hence $f$ is surjective. In addition, since $\Phi$ is one-to-one on $V^+$, it follows also that $f$ is one-to-one. 

\medskip

\textsc{Subadditivity of $f$.} 
By the subadditivity of $\psi$ and the additivity of $T$, we get
\[
\begin{split}
f(x+y)
&=
x+y+\psi(T(x+y))
=
x+y+\psi(T(x)+T(y))
\\
&
\le
x+y+\psi(T(x))+\psi(T(y))
=
f(x)+f(y)
\end{split}
\]
for each $x,y>0$. 
Thus $f$ is subadditive.

\medskip

\textsc{Limit superior at $0$.} 
Taking into account $\psi<1$, we have 
$f(x)=x+\psi(T(x)) <x+1$ for all $x>0$. This implies that 
\[
\limsup_{x\to0}f(x)\le1.
\]
On the other hand, we get by construction that 
$$
(a_n^+, t_n^+) \in W^{(2n-1)}_\star\subseteq W_\star=W_0\subseteq W_{\mathfrak{c}}=V=\mathrm{Graph}(T)
$$
for all $n\ge 1$. Since $\psi$ is increasing on $[0,\infty)$, and recalling that $T(a_n^+)=t_n^+>n$ and $a_n^+>0$ by \eqref{eq:uglyconstraints}, we obtain 
$$
\forall n\ge 1, \quad 
f(a_n^+)=
a_n^++\frac{|T(a_n^+)|}{1+|T(a_n^+)|}>\frac{|T(a_n^+)|}{1+|T(a_n^+)|}>\frac{n}{n+1}.
$$
Again by \eqref{eq:uglyconstraints} we have $\lim_n a_n^+=0$, hence
$$
\limsup_{x\to 0}f(x) 
\ge \limsup_{n\to \infty}f(a_n^+)
\ge \limsup_{n\to \infty}\frac{n}{n+1}=1.
$$
Therefore $\limsup_{x\to 0}f(x) =1$. 

\medskip

\textsc{Limit inferior at $0$.} It proceeds similarly as in the previous case. Since by construction we have $(a_n^-,t_n^-) \in W_\star^{(2n)} \subseteq \mathrm{Graph}(T)$ for all $n\ge 1$, it follows by \eqref{eq:uglyconstraints} that 
$$
\forall n\ge 1, \quad 
f(a_n^-)=a_n^-+\frac{|T(a_n^-)|}{1+|T(a_n^-)|}<a_n^-+|T(a_n^-)|=a_n^-+|t_n^-|<\frac{2}{n}.
$$
Taking into account that $\lim_n a_n^-=0$, we conclude that $\liminf_{x\to 0}f(x)=0$. This completes the proof of Theorem \ref{thm:mainjanusz}.  
\end{proof}

\section{Further properties of the construction}\label{sec:final} 

As it follows from the construction in the proof of Theorem \ref{thm:mainjanusz}, if $x>0$, then $(x,T(x)) \in V$ and so $T(x) \notin \mathbb{Q}$. Taking into account that $0\le \psi<1$, we conclude that the map $f$ satisfies also 
$$
\forall x>0, \qquad x<f(x)=x+\psi(T(x))<x+1.
$$

\medskip

Finally, it is worth noting that, given a real $k>0$, there exists a subadditive bijection $g: (0,\infty)\to (0,\infty)$ such that $x<g(x)<x+k$ for all $x>0$ and, in addition, 
$$
\liminf_{x\to 0}g(x)=0\qquad \text{ and }\qquad 
\limsup_{x\to 0}g(x)=k.
$$
Indeed, it is enough to consider the map $g(x):=kf(x/k)$. 

\section{Concluding remarks}\label{sec:conclusions}

In this concluding section, we provide some details about the connection between Theorem~\ref{thm:mainjanusz} and a conjectural strengthening of a converse Minkowski-type theorem. 

To this aim, given a measure space $(\Omega,\Sigma,\mu)$, let $\mathcal{S}_{\mu}$ be the set of all nonnegative $\mu$-integrable simple functions $h:\Omega\to\mathbb R$. 
Denote the support of each $h \in \mathcal{S}_{\mu}$ by $\mathrm{supp}(h):=\{\omega\in \Omega: h(\omega)\neq 0\}$. Moreover, given a bijection $\varphi: (0,\infty)\to (0,\infty)$, define the functional $\bm{P}_\varphi: \mathcal{S}_\mu\to [0,\infty)$ by 
$$
\forall h \in \mathcal{S}_\mu, \qquad 
\bm{P}_\varphi(h):=
\begin{cases}
\displaystyle
\,\varphi^{-1}\left(\int_{\mathrm{supp}(h)}\varphi\circ |h|\,\mathrm{d}\mu\right)\,\,
& \text{if } \mu(\mathrm{supp}(h))>0,\\[2ex]
\,0
& \text{if } \mu(\mathrm{supp}(h))=0.
\end{cases}
$$
Then $\bm{P}_\varphi$ is well defined, and similar in shape to the $L^p$ norm. Accordingly, Matkowski proved in \cite[Theorem 1]{MR1009994} the following characterization: 
\begin{thm}\label{thm:matk3874}
    Let $(\Omega,\Sigma,\mu)$ be a measure space for which 
    \begin{equation}\label{eq:measurabledisjoint}
    \exists A,B \in \Sigma,\quad 
    0<\mu(A)<1<\mu(B)<\infty
    \end{equation}
    Pick also a bijection $\varphi: (0,\infty)\to (0,\infty)$ such that $\lim_{x\to 0}\varphi^{-1}(x)=0$. 

    If the functional $\bm{P}_\varphi$ is subadditive then there exists $p\ge 1$ such that $\varphi(t)=\varphi(1)t^p$ for all $t \in (0,\infty)$. 
\end{thm}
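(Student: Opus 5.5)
This is Matkowski's converse Minkowski theorem (Theorem \ref{thm:matk3874}). The plan is to turn subadditivity of $\bm{P}_\varphi$ into a functional inequality for $\varphi$, then use $\lim_{x\to0}\varphi^{-1}(x)=0$ to force continuity and then a power law.

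\textbf{Step 1: reduction to two variables.} Take $A,B$ as in \eqref{eq:measurabledisjoint}. Using simple functions supported on $A$, on $B$, and on pieces of them, evaluate $\bm{P}_\varphi$ on $h=x\mathbf 1_{E}+y\mathbf 1_{F}$ with $E,F$ disjoint. This gives $\bm{P}_\varphi(h)=\varphi^{-1}(\varphi(x)\mu(E)+\varphi(y)\mu(F))$. Comparing $h_1+h_2$ with $h_1$ and $h_2$ then yields inequalities of the form
\[
\varphi^{-1}\bigl(s\varphi(x_1+x_2)+t\varphi(y_1+y_2)\bigr)\le \varphi^{-1}\bigl(s\varphi(x_1)+t\varphi(y_1)\bigr)+\varphi^{-1}\bigl(s\varphi(x_2)+t\varphi(y_2)\bigr)
\]
for $s=\mu(A)$, $t=\mu(B\setminus A)$, or similar admissible values. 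Taking $y_i$ so that the $t$-terms degenerate, or using $h$ supported on a single set of measure $s$ or $\mu(B)$, we obtain that
\[
F_s(x):=\varphi^{-1}(s\varphi(x))
\]
is subadditive on $(0,\infty)$ for $s=\mu(A)<1$ and $s=\mu(B)>1$. $F_s$ is a bijection, so it is a subadditive bijection.

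\textbf{Step 2: regularity.} We have $F_s^{-1}=F_{1/s}$. The hypothesis $\lim_{x\to0}\varphi^{-1}(x)=0$ gives $F_s(x)\to0$ as $x\to0$. We then invoke the Matkowski--Świątkowski result (Corollary 2 of \cite{MR1176072}), quoted in the introduction, to conclude that $F_s$ is a homeomorphism. It is therefore increasing; in fact $F_s$ is increasing for $s<1$ and satisfies $F_s(x)<x$. This is where the hypothesis is essential: by Theorem \ref{thm:mainjanusz}-type examples, a subadditive bijection alone need not be continuous. We then bootstrap to monotonicity and continuity of $\varphi$ itself, using the fact that $\varphi(F_s(x))=s\varphi(x)$ conjugates $\varphi$ to multiplication by $s$.

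\textbf{Step 3: identifying the power.} With $\varphi$ a homeomorphism, the full two-set inequality of Step 1, after the substitution $u=\varphi(x)$, becomes a Minkowski-type inequality for the weighted quasi-arithmetic mean generated by $\varphi$ at weights $s,t$. I would first show that $\varphi$ is multiplicative up to a constant, i.e.\ $\varphi(\lambda x)=\varphi(\lambda)\varphi(x)/\varphi(1)$. The approach is to use subadditivity together with positive homogeneity, which follows by applying both $F_s$ and $F_{1/s}$ and comparing. Continuity plus multiplicativity then gives $\varphi(t)=\varphi(1)t^p$, and $p\ge1$ follows from the triangle inequality on two disjoint supports.

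I expect the main obstacle to be this homogeneity step: deriving the exact equation $F_s(\lambda x)=\lambda F_s(x)$ from inequalities valid only for the two measures $\mu(A)$ and $\mu(B)$. My first attempt would be the classical trick: subadditive maps $F$ and $G$ with $F\circ G=\mathrm{id}$, both vanishing at $0$ and concave-like, must be linear on each ray. Failing that, I would extract homogeneity from the equality case of Minkowski with proportional functions.
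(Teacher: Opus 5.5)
There is a genuine gap in Step 2: you apply the regularity theorem to $F_s$, but its hypothesis has not been verified for $F_s$. The assumption $\lim_{x\to0}\varphi^{-1}(x)=0$ controls $\varphi^{-1}$ near $0$. Concluding $F_s(x)=\varphi^{-1}(s\varphi(x))\to0$ would need $s\varphi(x)\to0$, i.e.\ control of $\varphi$ near $0$. For bijections these two properties are unrelated. The map $f$ of Theorem~\ref{thm:mainjanusz}, used as $\varphi$, has $\lim_{x\to0}\varphi^{-1}(x)=0$ but $\limsup_{x\to0}\varphi(x)=1$. Similarly, $F_s(x)<x$ for $s<1$ presupposes that $\varphi$ is increasing.

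The proposed bootstrap through $\varphi\circ F_s=s\varphi$ cannot rescue this. For any increasing homeomorphism $F$ with $F(x)<x$, every bijection of $[F(1),1)$ onto $[s,1)$ extends uniquely to a bijection $\varphi$ with $\varphi\circ F=s\varphi$. So the conjugacy relation is compatible with nowhere continuous $\varphi$.

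The missing idea, which is exactly the step the paper describes, is to split over disjoint supports. Since $\mu(B\setminus A)\ge\mu(B)-\mu(A)>0$, take $h_1=x\mathbf{1}_A$ and $h_2=y\mathbf{1}_{B\setminus A}$, rather than two functions with the same support as in your Step 1. Put $u=\mu(A)\varphi(x)$ and $v=\mu(B\setminus A)\varphi(y)$; these range independently over $(0,\infty)$, and subadditivity of $\bm{P}_\varphi$ reads $\varphi^{-1}(u+v)\le\varphi^{-1}(u)+\varphi^{-1}(v)$. So $f=\varphi^{-1}$ is itself a subadditive bijection, and the hypothesis says precisely that $\lim_{x\to0}f(x)=0$. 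The Matkowski--{\'S}wi{\k a}tkowski theorem then makes $\varphi^{-1}$, hence $\varphi$, an increasing homeomorphism. Only after this are your $F_s$ increasing homeomorphisms with $F_{\mu(A)}(x)<x<F_{\mu(B)}(x)$.

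Step 3 is only an outline; the paper also leaves this part to Matkowski's original proof. The mechanism you suggest is not available as stated: $F_{1/s}$ is not among the maps known to be subadditive, because $1/\mu(A)$ need not be a value of $\mu$. Once $\varphi$ is an increasing homeomorphism, you can repair this by composing. Since $F_r\circ F_{r'}=F_{rr'}$ and compositions of increasing subadditive maps are subadditive, every $F_{\mu(A)^m\mu(B)^n}$ is subadditive. Either $\mu(A)^m\mu(B)^n=1$ for some $m,n\ge1$, which yields $F_{1/\mu(A)}$, or these values are dense and pointwise limits give every $F_r$. In both cases some $F_r$ with $r\neq1$ has $F_r$ and $F_r^{-1}$ increasing and subadditive, so $F_r$ is additive and $F_r(x)=cx$. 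In the first case this only gives $\varphi(cx)=r\varphi(x)$ for $r$ in a cyclic group, which is compatible with $\varphi(x)=x^pg(\log x)$ for periodic $g$. So the two-set inequality must really be used. For instance, $M(x,y)=\varphi^{-1}(\mu(A)\varphi(x)+\mu(B\setminus A)\varphi(y))$ is subadditive and satisfies $M(c^kx,c^ky)=c^kM(x,y)$ for all $k\in\mathbb Z$. Fekete's lemma along rays then makes $M$ positively homogeneous, and homogeneity of $M$ forces $\varphi(\lambda x)=\varphi(\lambda)\varphi(x)/\varphi(1)$.
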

In other words, the subadditivity of  $\bm{P}_\varphi$ implies that $\bm{P}_\varphi$ is the $L^p$ norm for some $p\ge 1$, providing a converse to Minkowski's inequality. 
In the proof, the subadditivity of $\bm{P}_\varphi$ and \eqref{eq:measurabledisjoint} implies that $f:=\varphi^{-1}$ is a subadditive bijection on $(0,\infty)$. Taking into account the hypothesis $\lim_{x\to 0}\varphi^{-1}(x)=0$ (that is, $\lim_{x\to 0}f(x)=0$) and using the main result in \cite{MR1088646}, it follows that $f$ is an increasing homeomorphism of $(0,\infty)$. 
This fact, playing a key role in the proof of the above theorem, led to the natural question of whether condition $\lim_{x\to 0}\varphi^{-1}(x)=0$ could be weakened. 
Question \ref{q:matk2} concerns the regularity step used in Matkowski’s proof: informally, once subadditivity of $\bm P_\varphi$ yields subadditivity of $f=\varphi^{-1}$, the assumption $\lim_{x\to0}f(x)=0$ forces a good behavior of $f$. Theorem \ref{thm:mainjanusz} shows that the weaker condition $\liminf_{x\to0}f(x)=0<\limsup_{x\to0}f(x)<\infty$ is insufficient by itself to force continuity.

In connection with Theorem \ref{thm:matk3874}, recall that, if $f: (0,\infty)\to (0,\infty)$ is a subadditive injection and there exists a set $C\subseteq (0,\infty)$ such that 
$$
\lim_{x\to 0}(f\upharpoonright C)(x)=0
\quad \text{ and }\quad 
\liminf_{x\to 0^+}\frac{\lambda(C\cap [0,x])}{x}>0,
$$
then $f$ is continuous, cf. \cite[Theorem 6]{MR2928997}; here $\lambda$ stands for the Lebesgue measure.

Lastly, it is worth remarking that several alternatives which replace the condition $\lim_{x\to 0}\varphi^{-1}(x)=0$ in Theorem \ref{thm:matk3874} have been studied in \cite{MR1400540}. 

\section{Acknowledgments} The author is grateful to Janusz Matkowski for proposing Question \ref{q:matk2} and providing all the detailed motivations included in Section \ref{sec:conclusions}. 

\subsection{Declaration of competing interest} The author confirms that no data have been used for the preparation of the manuscript. In addition, he declares that there is no conflict of interest and that no
fundings were received.

\subsection{Data availability} No data was used for the research described in the article.

\bibliographystyle{amsplain}

\end{document}